\date{}
 \theoremstyle{plain}
\newtheorem{theo}{Theorem}[section] \theoremstyle{plain}
 \theoremstyle{plain}
\newtheorem{coro}[theo]{Corollary} \theoremstyle{definition}
\newtheorem{defn}[theo]{Definition} \theoremstyle{definition}
 \theoremstyle{remark}
\newtheorem{rem}[theo]{Remark} \theoremstyle{remark}
\newtheorem{proposition}[theo]{Proposition}
\begin{document}
\setcounter{page}{1}

\title{ A Note on Submaximal Operator Space Structures }

\author{Vinod Kumar. P }
\address{Department of Mathematics,\\ Thunchan Memorial Govt College, Tirur,\\
Kerala , India. \em E-mail: vinodunical@gmail.com  \em}
   \author{  M. S. Balasubramani}
   \address{Department of Mathematics, University of Calicut,
Calicut University. P. O., Kerala, India. \em E-mail:
 msbalaa@rediffmail.com \em}

\date{}
\maketitle

\begin{abstract}

 In this note, we consider the smallest submaximal space
structure $\mu(X)$ on a Banach space $X$.  We derive a
characterization of $\mu(X)$ up to complete isometric isomorphism
in terms of a universal property. Also, we show that an injective
Banach space has a unique submaximal space structure and we
explore some duality relations of $\mu$-spaces.
\flushleft {AMS Mathematics Subject Classification(2000) No: 46L07,47L25}\\
\flushleft {Key Words: operator spaces,  maximal
 operator spaces, submaximal spaces, $\mu$- spaces.}

\end{abstract}
\thispagestyle{empty}
\section{Introduction and Preliminaries}
 \noindent An
operator space consists of a Banach space $X$ and an isometric
embedding $J: X \rightarrow \mathcal {B(\mathcal H)}$, for some
Hilbert space $\mathcal H $. In contrast to the Banach space case,
an operator space carries not just a complete norm on $X$, but
also a sequence of complete norms on $M_n(X)$, the space of $n
\times n$ matrices on $X$, for every $n\in \mathrm{N}$. These
matrix norms are obtained via the natural identification of
$M_n(X)$ as a subspace of $M_n(\mathcal {B(\mathcal H)}) \approx
\mathcal {B(\mathcal H}^n)$, where  $\mathcal H^n $ is the Hilbert
space direct sum of $n$ copies of $\mathcal H $. In 1988, Z-J.
Ruan \cite{r88} characterized  the sequence of matrix norms on a
Banach space $X$ that makes $X$ an operator space, in terms of two
properties  of matrix norms, known as Ruan's axioms.
 An (\em abstract) operator space \em is a pair $ (X, \{  \left\| . \right\| _n \}_{n \in N} )$
 consisting of a linear space $X$ and a complete norm $ \left\| . \right\| _n $ on  $ M_n(X) $ for every $ n\in N $,
 such that
$(R1)$ $\left\| \alpha x \beta \right\|_n \leq \left\| \alpha
\right\| \left\| x \right\|_n \left\| \beta \right\|$ for all
$\alpha, \beta \in M_n $ and for all $x \in M_n(X) $, and $(R2)$ $
\left\| x \oplus y \right\|_{m+n} = max \{  \left\| x \right\|_m ,
\left\| y \right\|_n \} $ for all $x \in M_m(X) $, and for all $y
\in M_n(X) $, where $x \oplus y $ denotes the matrix \( \left[
\begin{array}{cc}
x & 0 \\
0 & y
\end{array} \right] \) in $ M_{m+n}(X) $ where  $0$ stands for zero matrices of appropriate
orders.  The sequence of matrix norms $ \{  \left\| . \right\| _n
\}_{n \in \mathrm{N}} $ is called an \em operator space structure
\em
 on the linear space $X$.  An opeartor space structure on a normed space $(X, \left\|.\right\|)$ will usually mean a sequence of matrix norms
 $ \{  \left\| . \right\| _n \}_{n \in \mathrm{N}} $ as above, but with $   \left\| . \right\|_1 = \left\|.\right\| $
and in  that case, we say $ \{  \left\| . \right\| _n \}_{n \in
\mathrm{N}} $ is an admissible operator space structure on $X$. If
$X$ is an abstract operator space, then there exists  a linear
complete isometry $\varphi
  : X \rightarrow \mathcal B(\mathcal H)$ for some Hilbert space
  $\mathcal H $. Also, the matrix norms on
  an operator space induced via the embedding $ M_n(X) \subset M_n(\mathcal B(\mathcal H))
  $ satisfies the Ruan's axioms. Thus Ruan's characterization allows us to describe an operator space
  in an abstract way free of any concrete representation on a Hilbert space.\\
  If $X$ is a Banach space, the  closed unit
ball $\{ x \in X ; \left\| x  \right\| \leq 1 \}$ is denoted by
$Ball($X$)$. If  $X$ and $Y$ are  operator spaces and $\varphi : X
\rightarrow Y$ is a linear map,  $ \varphi^ {(n) } : M_n(X)
\rightarrow M_n(Y)$,
 given  by $[x_{ij}] \rightarrow [\varphi (x_{ij})]$, with $[x_{ij}] \in M_n(X)$ and  $n \in \mathrm{N}$ , determines a  linear map from $M_n(X)$ to $M_n(Y)$.
The \em complete bound norm \em (in short \em cb-norm \em) of
$\varphi$ is defined as  $ \left\| \varphi \right\|_{cb}  =
\mbox{sup}\{ \left\| \varphi^{(n)} \right\| ;  n \in \mathrm{N} \}
$.  $ \varphi $ is \em completely bounded \em if $ \left\| \varphi
\right\|_{cb} <  \infty $.
 $ \varphi $ is  a  \em complete isometry \em if
each map $ \varphi^ {(n) } : M_n(X) \rightarrow M_n(Y)$
 is an isometry.
 If $ \varphi $ is a complete isometry, then  $ \left\| \varphi \right\|_{cb}  = 1 $.
  If $ \left\| \varphi \right\|_{cb}  \leq
 1$ , $\varphi$ is said to be a complete contraction.
 If $\varphi : X \rightarrow Y $ is a completely bounded linear bijection and if its inverse is also completely bounded,
 then $ \varphi $ is said to be a \em complete isomorphism \em .
 Two  operator spaces are considered to be the same if there is
a complete isometric isomorphism  from $X$ to $Y$.  In that case,
we write $ X \approx Y$  \em completely isometrically \em .
\\
A Banach space $Z$ is \em injective \em  if for any Banach spaces
$X$ and $Y$  where  $Y$ contains $X$ as a closed subspace, and for
any completely bounded linear map $\varphi: X \rightarrow Z$,
there exists a  bounded linear extension $\tilde{\varphi}: Y
\rightarrow Z $ such that $\tilde{\varphi}|_{X} = \varphi$ and  $
\left\|\tilde{ \varphi }\right\| =  \left\| \varphi\right\|$. In a
similar manner, an operator space $Z$ is \em injective \em
\cite{ce77} if for any operator spaces $X$ and $Y$  where  $Y$
contains $X$ as a closed subspace, and for any completely bounded
linear map $\varphi: X \rightarrow Z$, there exists a completely
bounded extension $\tilde{\varphi}: Y \rightarrow Z $ such that
$\tilde{\varphi}|_{X} = \varphi$ and  $ \left\|\tilde{ \varphi
}\right\|_{cb} =  \left\| \varphi\right\|_{cb}$. An operator space
$X$ is \em homogeneous \em \cite{gp96} if each bounded linear
operator
 $\varphi$ on $X$ is completely bounded with $\left\| \varphi \right\|_{cb} =
   \left\| \varphi \right\|$. More
information about operator spaces and completely bounded mappings
may be found in the papers \cite{er91}, \cite{er93}, \cite{vp92}
and \cite{sm83} or in the books \cite{er02}, \cite{pa02} and
\cite{gp03}.
\section{Minimal and Maximal operator space structures }
A given Banach space has in general many realizations as an
operator space. A very basic question in operator space theory is
to exhibit some particular operator space structures on a given
Banach space $X$.
 In the most general situation, Blecher and  Paulsen \cite{bp91} achieved this by noting
 that the set of all operator space structures admissible on a given Banach
space $X$ admits a minimal and maximal element.  These structures
were further investigated in \cite{vp92} and \cite{vp96}. By Hahn-
Banach theorem, it follows that any subspace of a minimal operator
space is again minimal. Quotients of  minimal operator spaces are
called  $Q $-spaces \cite{er}, and they need not be minimal. Also,
the category of $Q $-spaces is stable under taking quotients and
subspaces.
 An operator space $X$ is said to be \em submaximal \em\cite{gp03} if it embeds
completely isometrically into a maximal operator space Y.
Generally, a submaximal space need not be maximal, but maximality
passes to quotients\cite{gp03}. Subspace structure of various
maximal operator spaces were further studied in \cite{tim04}.
  \\
 If $ X $ is a Banach space, then there is a  minimal
 operator space structure on $X$,  denoted by  $Min(X)$, and this quantization
   is characterized by
   the property that for any arbitrary operator space $Y$ and for
   any bounded linear map $\varphi : Y \rightarrow Min(X)$ is
   completely bounded and satisfies  $\left\| \varphi : Y \rightarrow Min(X)  \right\|_{cb} =
    \left\| \varphi : Y \rightarrow X  \right\|$.
An operator space $X$ is minimal if $Min(X)= X$. Also,  an
operator space is minimal if and only if it is completely
isometric to a subspace of a commutative C*- algebra.
  If $X$ is a Banach space, there is a maximal way to
consider it as an operator space. The matrix norms given by
$\left\| [x_{ij}]\right\|_n = \mbox{sup}\{  \left\|
[\varphi(x_{ij})] ) \right\| ;  \varphi \in Ball (B(X, Y))\}$
where the supremum is taken over all operator spaces $Y$ and all
linear maps $\varphi \in Ball (B(X, Y))$, makes $X$ an operator
space. This operator space is denoted by  $Max(X)$ and is called
the maximal operator space structure on $X$. For $[x_{ij}] \in
M_n(X)$, we write $ \left\| [x_{ij}]
   \right\|_{Max(X)}$ to denote its norm as an element of $ M_n(Max(X))$.
An operator space $X$ is maximal if $Max(X) = X $. By Ruan's
theorem, we also have $\left\| [x_{ij}]\right\|_{Max(X)} =
\mbox{sup}\{  \left\| [\varphi(x_{ij})] ) \right\| ;  \varphi \in
Ball(B(X, B(\mathcal{H})))\}$ where the supremum is taken over all
Hilbert spaces $\mathcal{H}$ and all linear maps $\varphi \in
Ball(B(X, B(\mathcal{H})))$. By the definition of $Max(X)$, any
operator space structure  on $X$ is  smaller than $Max(X)$. This
maximal quantization of a normed space is characterized by
   the property that for any arbitrary operator space $Y$,
   any bounded linear map $\varphi : Max(X) \rightarrow Y$ is
   completely bounded and satisfies  $\left\| \varphi : Max(X) \rightarrow Y  \right\|_{cb} =
    \left\| \varphi : X \rightarrow Y  \right\|$.
    If $X$ is any operator space, then the identity map on $X$
    defines completely contractive maps $Max(X)\rightarrow X \rightarrow
    Min(X)$.
 For any Banach space $X$, we have the following duality relations \cite{b92}: $Min(X)^* \approx Max(X^*) $ and
 $ Max(X)^* \approx Min (X^*)$ completely isometrically.
 \\
 \hspace{0.5in} Just like every operator space embeds completely
isometrically into $B(\mathcal{H})$ for some Hilbert space
$\mathcal{H}$, every submaximal space embeds completely
isometrically into $Max(B(\mathcal{H}))$ for some Hilbert space
$\mathcal{H}$. To see this, let $X\subset Y$, where $Y$ is a
maximal operator space. Also, let $\iota : X \to B(\mathcal{H})$
be a complete isometric inclusion.
  Since $B(\mathcal{H})$ is injective, the
inclusion $\iota : X \to B(\mathcal{H})$ extends to a complete
contraction $\varphi: Y\rightarrow B(\mathcal{H}) $. Since $Y$ is
maximal, $ \left\| \varphi : Y \rightarrow
Max(B(\mathcal{H}))\right\|_{cb} = \left\| \varphi : Y \rightarrow
Max(B(\mathcal{H}))\right\|\leq \left\| \varphi : Y \rightarrow
B(\mathcal{H})\right\|_{cb}\leq 1 $. Let $\widetilde{\iota}=
\varphi|_X$, then $ \left\| \widetilde{\iota} : X \rightarrow
Max(B(\mathcal{H}))\right\|_{cb} \leq 1 $. If
$\widetilde{\iota}(X)= \widetilde{X} $, by the definition of
maximal operator spaces, we have $ \left\| \widetilde{\iota}^{-1}
: \widetilde{X} \rightarrow X \right\|_{cb} \leq 1 $. Thus
$\widetilde{\iota} : X \to \widetilde{X}$
 is a completely isometric isomorphism.
\\

In the following, we consider the smallest submaximal space
structure on a Banach space $X$, namely the $\mu$-space structure
which is denoted by $\mu(X)$. We prove that $\mu(X)$ will be
homogeneous. We also derive a universal property of $\mu$-spaces
which distinguishes it among other submaximal spaces. By making
use of this property, we show that the class of $\mu$-spaces is
stable under taking subspaces. Finally, we explore the duality
relations of $\mu$-spaces.
\section{Main results}
Just like minimal and maximal operator space structures, we
 have a \em{ minimal}  and a \em{maximal } way to view a Banach space $X$ as
 a submaximal space, which we denote as $Min_S(X)$ and $Max_S(X)$
 respectively.
 From the definition of a submaximal space it follows that
 $Max_S(X)=Max(X)$. T. Oikhberg \cite{tim04} introduced the  $\mu$-space structure
 on a Banach space $X$ and  proved that $Min_S(X)= \mu(X)$.
Suppose $X$ is a Banach space.  Note  that $Min(X)$ is the
operator space structure on $X$ inherited by regarding $X \subset
C(K)$, where $ K = Ball(X^*)$, the  closed unit ball of the dual
space of $X$ with its weak* topology. Also, from \cite{b92},  $
Max(X)^* = Min (X^*)$, so that
 $ Max(X)^{**} = (Min (X^*))^* = Max(X^{**})$ completely isometrically.  Since
$X\hookrightarrow X^{**}$, we have $ Max(X)\hookrightarrow
Max(X^{**})$ completely isometrically.
\begin{defn}
An operator space $X$ is a $\mu$-space if it embeds completely
isometrically into $Max(C(K)^{**})$,  where $K = Ball(X^*)$  the
unit closed ball of the dual space of $X$ with its weak* topology.
\end{defn}
 A Banach space $X$, with the above defined $\mu$-space structure is  denoted  by $\mu(X)$ and the corresponding sequence of matrix norms
 by  $ \{  \left\| . \right\|^{\mu} _{n} \}_{n \in \mathrm{N}} $.
Note that the $\mu$-space structure on a Banach space $X$ is an
admissible operator space structure on $X$.
\begin{rem} Suppose  that  $X$ and $Y$ are injective Banach spaces and
 $E$ and $F$ are isomorphic (isometric) closed subspaces of $X$
and $Y$ respectively. Let $\varphi: E \rightarrow F$ be an
isomorphism. Since $Y$ is injective, there exists a map
$\tilde{\varphi}: Max(X) \rightarrow Max(Y)$ such that
$\tilde{\varphi}|_{E} =
 \varphi $ and $\left\|\tilde{ \varphi }\right\|=  \left\|
\varphi\right\|$. Since $X$ has maximal operator space structure,
we have $ \left\|\tilde{ \varphi }\right\|_{cb} =  \left\|
\varphi\right\|$. Thus, $ \left\| \varphi \right\|_{cb} \leq
\left\|\tilde{ \varphi }\right\|_{cb}= \left\|\tilde{ \varphi
}\right\|=\left\| \varphi\right\|$. This shows that $\varphi$ is
completely bounded. Similarly $\varphi^{-1}$ is also completely
bounded, so that $\varphi$ is a complete isomorphism. Thus,   $E$
and $F$ are completely (isometrically) isomorphic as operator
subspaces of $Max(X)$ and $Max(Y)$ respectively.   From this fact,
it follows that $\mu$-spaces can also be described as a submaximal
subspace of an injective commutative $C^*$- algebra,  because the
operator space structure is independent of the
 particular embedding.
\end{rem}
 Now we give a direct proof, different from \cite{tim04} of the fact that $\mu(X)$ is  the \em smallest \em
submaximal operator space structure on a given Banach space $X$.
\begin{theo}
Let $X$ be a Banach space. Then $\mu(X)$ is the smallest
submaximal space structure on $X$. \label{smallest}
\end{theo}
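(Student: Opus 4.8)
The plan is to show two things: first, that $\mu(X)$ is itself a submaximal space structure on $X$, and second, that it is dominated by every submaximal structure on $X$. The first point is essentially built into the definition together with the duality computation already recorded in the excerpt: since $K = Ball(X^*)$ and $X \hookrightarrow C(K)$ isometrically via $Min(X) \subset C(K)$, we get $X \hookrightarrow C(K) \hookrightarrow C(K)^{**}$ isometrically, hence a completely isometric embedding $\mu(X) \hookrightarrow Max(C(K)^{**})$ by definition. Because $C(K)^{**}$ is an injective $C^*$-algebra (being a commutative von Neumann algebra, or by Remark above realizing it as an injective commutative $C^*$-algebra), $Max(C(K)^{**})$ is a maximal operator space, so $\mu(X)$ embeds completely isometrically into a maximal operator space and is therefore submaximal. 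This also matches $Min_S(X)=\mu(X)$ as attributed to Oikhberg.

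For the main point, let $Z$ be any submaximal operator space structure on $X$; I must produce a complete contraction $Z \to \mu(X)$ which is the identity on the underlying Banach space $X$ (equivalently, show $\left\| [x_{ij}] \right\|^{\mu}_n \leq \left\| [x_{ij}] \right\|_{Z,n}$ for all matrices). By definition of submaximal, $Z$ embeds completely isometrically into a maximal operator space $Y = Max(W)$ for some Banach space $W$, with $X \subset W$ isometrically. Now I use the embedding $X \hookrightarrow C(K)$. The key move is to extend the \emph{inclusion} $X \to C(K)^{**}$ to a map out of $W$: since $C(K)^{**}$ is an injective Banach space, the bounded inclusion $X \to C(K)^{**}$ extends to a contraction $\psi : W \to C(K)^{**}$. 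Because $W$ carries its \emph{maximal} operator space structure, the characterizing property of $Max(W)$ (recorded in the excerpt: any bounded map out of $Max(W)$ is completely bounded with $\left\| \cdot \right\|_{cb} = \left\| \cdot \right\|$) applied with target $Max(C(K)^{**})$ gives $\left\| \psi : Max(W) \to Max(C(K)^{**}) \right\|_{cb} = \left\| \psi : W \to C(K)^{**} \right\| \leq 1$. Restricting the complete contraction $\psi : Max(W) \to Max(C(K)^{**})$ to the completely isometric copy of $Z$ sitting inside $Max(W)$, and noting its range lands in the completely isometric copy of $\mu(X)$ sitting inside $Max(C(K)^{**})$ (since on the Banach-space level $\psi|_X$ is the identity inclusion $X \to C(K)^{**}$), we obtain a complete contraction $Z \to \mu(X)$ that is the identity on $X$. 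This is exactly the inequality $\mu(X) \leq Z$.

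The step I expect to be the main obstacle is the honest justification that the complete contraction $\psi|_{Z}$ actually corestricts to a map \emph{into} the operator subspace $\widetilde{X} = \mu(X) \subset Max(C(K)^{**})$, rather than merely into $Max(C(K)^{**})$ — and that the resulting matrix-norm inequality is genuinely comparing $\mu(X)$ with $Z$ on the nose. This requires tracking that $\psi(X) = X$ as a subset of $C(K)^{**}$ with $\psi|_X = \mathrm{id}_X$, so that the corestriction $Z \to \mu(X)$ is the identity map on the common Banach space $X$; then ``complete contraction with cb-norm $\leq 1$ that is the identity on $X$'' unwinds precisely to $\left\|[x_{ij}]\right\|^{\mu}_n \leq \left\|[x_{ij}]\right\|_{Z,n}$. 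I would also remark that this argument is uniform in the choice of maximal superspace $Y$, which is what makes $\mu(X)$ the \emph{smallest} such structure rather than merely a minimal one; combined with the first paragraph (that $\mu(X)$ is itself submaximal), this completes the proof.
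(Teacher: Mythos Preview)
Your proposal is correct and follows essentially the same approach as the paper: both use injectivity of $C(K)^{**}$ as a Banach space to extend the identity inclusion $X \hookrightarrow C(K)^{**}$ to a contraction out of the ambient space $W$ (the paper's $Y$), and then invoke maximality of $Max(W)$ to upgrade this to a complete contraction into $Max(C(K)^{**})$. The only cosmetic difference is that the paper unpacks the matrix norms via the supremum formula $\left\|[x_{ij}]\right\|_{Max(\cdot)} = \sup_{v}\left\|[v(x_{ij})]\right\|$ and shows each test map $v$ for $\mu(X)$ yields a test map $v\circ\widetilde{w}$ for the competing structure, whereas you phrase the same inequality abstractly as ``$\psi|_{Z}: Z \to \mu(X)$ is the identity on $X$ and a complete contraction.''
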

\begin{proof}
 Let $j$ be a complete isometric
embedding of $X$ in $Max(C(K)^{**})$ described in the definition
of $\mu$-spaces.   Let $\varphi: X \rightarrow Max(Y)$ be a
complete isometric embedding of $X$ into $Max(Y)$. Let the
sequence of matrix norms on $X$ obtained via this embedding be
denoted by $ \{  \left\| . \right\|^{Y} _{n} \}_{n \in \mathrm{N}}
$. Then for any $[x_{ij}] \in M_n
  (X)$, $$\left\| [x_{ij}]\right\|^{Y} _{n}=\left\| [\varphi(x_{ij})]\right\|
  _{Max(Y)} = \mbox{sup}\{  \left\|[u(\varphi(x_{ij})]) ) \right\| ;  u \in Ball(B(Y,
B(\mathcal{K})))\}$$ where the supremum is taken over all possible
maps $u : Y \rightarrow B(\mathcal{K})$ and over all Hilbert
spaces $\mathcal{K}$.\ Also, we have
 $$\left\| [x_{ij}]\right\|^{\mu} _{n}=\left\| [x_{ij}]\right\|
  _{Max(C(K)^{**})} = \mbox{sup}\{  \left\|[v(x_{ij})] \right\| ;  v \in Ball(B(C(K)^{**},
B(\mathcal{H})))\}$$ where the supremum is taken over all possible
maps $v : C(K)^{**} \rightarrow B(\mathcal{H})$ and over all
Hilbert spaces $\mathcal{H}$. Consider the following diagram.
\begin{equation}
\nonumber
\begin{CD}
X @>\varphi>>   Max(Y) @> u>> B(\mathcal{K})\\
@VV id V  \\
X @ > j>>      Max(C(K)^{**}) @> v>>B(\mathcal{H})
\end{CD}
\end{equation}
Since $\varphi^{-1}: \varphi(X) \subset Y \rightarrow X$ is
bounded, $w = j \circ id \circ \varphi^{-1}: \varphi(X)\subset
 Y \rightarrow C(K)^{**}$ is  bounded and $\left\| w \right\|= 1$. Since $C(K)^{**}$ is
 injective as a Banach space, $w$ has a bounded extension $\widetilde{w}: Y \rightarrow C(K)^{**}$
 with $\left\| \widetilde{w}\right\|=1$. Therefore, for  any map $v: C(K)^{**}\rightarrow B(\mathcal{H})$ with
$\left\| v\right\|\leq1$,  $\tilde{v}= v \circ \widetilde{w}: Y
\rightarrow B(\mathcal{H})$ is a bounded map and
 is a completely bounded map with
$\left\|\tilde{v}\right\|_{cb}\leq 1$ when regarded as a map from
$Max(Y)$ to $B(\mathcal{H})$.  Thus $\left\|
[x_{ij}]\right\|^{\mu} _{n}\leq \left\| [x_{ij}] \right\|^{Y}
_{n}$ for any $[x_{ij}] \in M_n (X)$. This shows that the
$\mu$-space structure on $X$ is the smallest submaximal space
structure on $X$.
\end{proof}
Maximal and minimal operator spaces are homogeneous, but in
general, submaximal spaces need not be homogeneous. Now we show
that $\mu$-spaces are homogeneous.
\begin{proposition}Every $\mu$-space is homogeneous.
\label{homo}
\end{proposition}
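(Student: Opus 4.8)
The plan is to use the realization, recorded in the Remark above, of a $\mu$-space $X$ as a completely isometric subspace of $Max(Z)$ for an injective commutative $C^*$-algebra $Z$ (concretely one may take $Z = C(K)^{**}$ with $K = Ball(X^*)$), combined with the Banach-space injectivity of such a $Z$ and the universal property of the maximal quantization. So the argument will be a diagram chase entirely parallel to the proof of Theorem \ref{smallest}.

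First I would fix a bounded linear map $\varphi : X \to X$ and, after rescaling, assume $\left\| \varphi \right\| \le 1$; composing with the inclusion $X \subset Z$ gives a contraction $\varphi : X \to Z$. Since $Z$ is injective as a Banach space (this is precisely the property of $C(K)^{**}$ that was used in the proof of Theorem \ref{smallest}) and $X$ is a closed subspace of $Z$, there is a norm-preserving extension $\widetilde{\varphi} : Z \to Z$ with $\widetilde{\varphi}|_{X} = \varphi$ and $\left\| \widetilde{\varphi} \right\| = \left\| \varphi \right\| \le 1$.

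Next I would invoke the universal property of $Max$ with target $Y = Max(Z)$: for any bounded $\psi : Max(Z) \to Y$ one has $\left\| \psi : Max(Z) \to Y \right\|_{cb} = \left\| \psi : Z \to Y \right\|$. Applied to $\widetilde{\varphi}$, and using that $Max(Z)$ carries the same norm as $Z$ at the first matrix level, this gives $\left\| \widetilde{\varphi} : Max(Z) \to Max(Z) \right\|_{cb} = \left\| \widetilde{\varphi} \right\| = \left\| \varphi \right\|$. Finally, since $\varphi$ maps $X$ into $X$ and $X \subset Max(Z)$ completely isometrically, each $M_n(X)$ sits isometrically inside $M_n(Max(Z))$; restricting $\widetilde{\varphi}^{(n)}$ to $M_n(X)$ therefore shows $\varphi : X \to X$ is completely bounded with $\left\| \varphi \right\|_{cb} \le \left\| \widetilde{\varphi} \right\|_{cb} = \left\| \varphi \right\|$. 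The inequality $\left\| \varphi \right\| \le \left\| \varphi \right\|_{cb}$ is automatic, so $\left\| \varphi \right\|_{cb} = \left\| \varphi \right\|$ and $X$ is homogeneous.

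I do not expect a genuine analytic obstacle. The only points requiring care are bookkeeping: applying the universal property of $Max$ in the correct direction (it governs maps \emph{out of} $Max(Z)$, so the target is chosen to be $Max(Z)$ itself rather than an arbitrary operator space), and observing that the passage to the subspace $X$ cannot increase the cb-norm because $M_n(X) \subset M_n(Max(Z))$ isometrically and $\varphi$ keeps $X$ invariant.
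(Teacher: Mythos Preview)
Your argument is correct and follows essentially the same route as the paper: extend $\varphi$ to $C(K)^{**}$ using its Banach-space injectivity, invoke the universal property of $Max$ to obtain $\left\|\widetilde{\varphi}\right\|_{cb}=\left\|\widetilde{\varphi}\right\|=\left\|\varphi\right\|$, and then restrict back to $X$. The paper's proof is just a terser version of exactly this.
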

\begin{proof} Let $\varphi: \mu(X) \rightarrow \mu(X)$ be a bounded
linear map. Then $\varphi $  extends to a bounded linear map
$\tilde{\varphi}$  on $C(K)^{**}$, and it is then completely
bounded on $Max(C(K)^{**})$ and $\|\tilde{\varphi}\|_{cb}=
\|\tilde{\varphi}\|=\|\varphi\|$. But $\|\varphi\|_{cb}\leq
\|\tilde{\varphi}\|_{cb}= \|\varphi\|$. Hence $\mu(X)$ is
homogeneous.
\end{proof}
Completely bounded Banach- Mazur distance between two operator
spaces $X$ and $Y$ is defined as  $d_{cb}(X, Y) = \mbox{inf} \{
   \left\| \varphi \right\|_{cb}  \left\| \varphi^{-1} \right\|_{cb}
   : \varphi: X \rightarrow Y$  is a  complete
  isomorphism $\} $.
If $X$ is a Banach space, then the $\mu$-space structure on $X$
lies between the operator space structures $Min(X)$ and $Max(X)$.
  Now we shall show that  the cb distance between these spaces can
  be realized as the cb - norm of the identity mapping between
  them.
\begin{theo}For a Banach space $X$ ,we have: $$d_{cb}(Min(X), \mu(X))= \left\| id: Min(X)\rightarrow \mu(X)
  \right\|_{cb}$$ and $$d_{cb}(\mu(X), Max(X))= \left\| id: \mu(X)\rightarrow Max(X)
  \right\|_{cb}$$
  \end{theo}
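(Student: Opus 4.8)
The plan is to prove each equality by establishing the two inequalities ``$\le$'' and ``$\ge$'' between $d_{cb}$ and the cb-norm of the identity; the real content is that, among all complete isomorphisms, the identity already realises the infimum defining $d_{cb}$.

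I would start with the inequality $\|id:Min(X)\to\mu(X)\|_{cb}\le d_{cb}(Min(X),\mu(X))$, which is where the argument actually happens. Let $\varphi:Min(X)\to\mu(X)$ be an arbitrary complete isomorphism, and regard $\varphi^{-1}$ also as a bounded linear self-map of the Banach space $X$. Since $\mu(X)$ is homogeneous (Proposition \ref{homo}), $\varphi^{-1}:\mu(X)\to\mu(X)$ is completely bounded, and its cb-norm equals its ordinary norm $\|\varphi^{-1}:X\to X\|$, which in turn is $\le\|\varphi^{-1}:\mu(X)\to Min(X)\|_{cb}$ since the cb-norm dominates the ordinary norm and the level-$1$ norms of $\mu(X)$ and $Min(X)$ both coincide with the norm of $X$. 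Now I would factor the identity $id:Min(X)\to\mu(X)$ as $\varphi:Min(X)\to\mu(X)$ followed by $\varphi^{-1}:\mu(X)\to\mu(X)$; submultiplicativity of the cb-norm under composition then gives $\|id:Min(X)\to\mu(X)\|_{cb}\le\|\varphi^{-1}:\mu(X)\to\mu(X)\|_{cb}\,\|\varphi:Min(X)\to\mu(X)\|_{cb}\le\|\varphi^{-1}\|_{cb}\,\|\varphi\|_{cb}$. Taking the infimum over all such $\varphi$ yields the desired inequality. The proof of $\|id:\mu(X)\to Max(X)\|_{cb}\le d_{cb}(\mu(X),Max(X))$ is identical, now using the homogeneity of $Max(X)$: for a complete isomorphism $\varphi:\mu(X)\to Max(X)$ one factors $id:\mu(X)\to Max(X)$ as $\varphi$ followed by $\varphi^{-1}:Max(X)\to Max(X)$ and bounds $\|\varphi^{-1}:Max(X)\to Max(X)\|_{cb}=\|\varphi^{-1}:X\to X\|\le\|\varphi^{-1}:Max(X)\to\mu(X)\|_{cb}$.

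For the reverse inequalities I would simply test the infimum defining $d_{cb}$ with the identity. When $\|id:Min(X)\to\mu(X)\|_{cb}$ is finite the identity is a complete isomorphism, and $\|id:\mu(X)\to Min(X)\|_{cb}=\|id:\mu(X)\to X\|=1$ by the universal property characterising $Min(X)$ (both structures being admissible, so the underlying Banach space of $\mu(X)$ is $X$ isometrically), whence $d_{cb}(Min(X),\mu(X))\le\|id:Min(X)\to\mu(X)\|_{cb}$; when that cb-norm is infinite the previous paragraph already forces $d_{cb}(Min(X),\mu(X))=+\infty$ as well, so equality holds in all cases. Likewise $\|id:Max(X)\to\mu(X)\|_{cb}=\|id:X\to\mu(X)\|=1$ by the universal property of $Max(X)$, giving $d_{cb}(\mu(X),Max(X))\le\|id:\mu(X)\to Max(X)\|_{cb}$.

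I expect the only subtle point — and the real mechanism of the proof — to be the passage from an arbitrary complete isomorphism to the identity map in the second paragraph: a priori the infimum defining $d_{cb}$ could be strictly smaller than the constant attained by the identity, and it is precisely the homogeneity of $\mu(X)$ (respectively of $Max(X)$) that rules this out, by letting the cb-norm of the ``rerouted'' factor $\varphi^{-1}$ be replaced by its plain operator norm on $X$ and hence be controlled by $\|\varphi^{-1}\|_{cb}$.
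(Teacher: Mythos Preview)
Your proof is correct and takes essentially the same approach as the paper: factor the identity through the homogeneous space $\mu(X)$ (respectively $Max(X)$) and use homogeneity to replace the cb-norm of the rerouted factor by its plain operator norm, which is then dominated by the cb-norm of the original inverse. The only cosmetic differences are the direction of the chosen isomorphism and your more explicit treatment of the easy inequality $d_{cb}\le\|id\|_{cb}$ (including the $\|id\|_{cb}=\infty$ case), which the paper leaves implicit.
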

  \begin{proof}  Let $T: \mu(X) \rightarrow Min(X) $ be a  complete
  isomorphism. Let $\tilde{T}$ denotes the same map regarded as a
  mapping from $\mu(X)$ to $\mu(X)$. Consider the following
  diagram.
\[
\xymatrix{
\mu(X)  \ar[rr]^T &&Min(X)\ar@{=}[d]\\
\mu(X) \ar[u]^{\tilde{T}}&&Min(X)\ar[llu]^{id}\ar[ll]^{T^{-1}}}
\]
  Here $id$ denotes the formal identity mapping regarded
  as a mapping from $Min(X)$ to $\mu(X) $.
  From the diagram, we get $$\left\| id: Min(X)\rightarrow \mu(X) \right\|_{cb}=\left\| \tilde{T}
   \circ T ^{-1} \right\|_{cb}\leq\left\| \tilde{T} \right\|_{cb}\left\| T^{-1} \right\|_{cb}
   .$$
   Since $\mu(X) $ is homogeneous (by above proposition~\ref{homo}), $\left\| \tilde{T} \right\|_{cb}= \left\| \tilde{T} \right\|=\left\| T \right\|
   = \left\| T \right\|_{cb}$, where the last equality is because
   of the minimal operator space structure of the range space of
   $T$. Thus we have $\left\| id: Min(X)\rightarrow \mu(X) \right\|_{cb}\leq \left\| T \right\|_{cb}
   \left\| T^{-1} \right\|_{cb}$.
   This shows that $d_{cb}(Min(X), \mu(X))= \left\| id: Min(X)\rightarrow \mu(X)
  \right\|_{cb}$. Similarly the other case follows.
  \end{proof}
 We show that among
submaximal spaces, the $\mu$-spaces are characterized by the
following universal property.
\begin{theo}\label{up}
A submaximal  space $X$ is a $\mu$-space up to  complete isometric
isomorphism if and only if for any submaximal space $Y$, any
bounded linear map $\varphi : Y \rightarrow X$ is completely
bounded with $\left\| \varphi \right\|_{cb} = \left\| \varphi
\right\|$.
\end{theo}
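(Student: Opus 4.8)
The plan is to prove the two implications separately; the substantive direction is essentially a reprise of the argument already used for Theorem~\ref{smallest}. For the \textbf{``only if''} direction, assume $X$ is a $\mu$-space, so that there is an injective (as a Banach space) commutative $C^*$-algebra $A$ --- for instance $A=C(K)^{**}$ with $K=Ball(X^*)$ --- and a complete isometric embedding $\theta:X\to Max(A)$. Let $Y$ be an arbitrary submaximal space and $\varphi:Y\to X$ a bounded linear map; since $Y$ is submaximal we may take $Y$ to be a completely isometric subspace of some maximal operator space $Max(Z)$. The steps, in order, are: (i) form $\theta\circ\varphi:Y\to Max(A)$, which has the same norm $\|\varphi\|$ at the first matrix level; (ii) forget the operator space structure on the target and, using the Banach-space injectivity of $A$, extend the bounded map $Y\to A$ to $\Phi:Z\to A$ with $\|\Phi\|=\|\varphi\|$; (iii) now read $\Phi$ as a map $Max(Z)\to Max(A)$ --- by the defining property of the maximal structure on the domain, $\Phi$ is completely bounded with $\|\Phi\|_{cb}=\|\Phi\|=\|\varphi\|$; (iv) restrict $\Phi$ to $Y$: since $Y\hookrightarrow Max(Z)$ is completely isometric and $\theta$ is a complete isometry onto its range inside $Max(A)$, this yields $\|\varphi:Y\to X\|_{cb}=\|\theta\circ\varphi:Y\to Max(A)\|_{cb}\le\|\Phi\|_{cb}=\|\varphi\|$. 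Since $\|\varphi\|_{cb}\ge\|\varphi\|$ always, equality follows.

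For the \textbf{``if''} direction, suppose $X$ is submaximal and has the stated universal property. First note that $\mu(X)$ is itself a submaximal space, since by definition it embeds completely isometrically into $Max(C(K)^{**})$, a maximal operator space. Applying the hypothesis with $Y=\mu(X)$ to the formal identity $id:\mu(X)\to X$ --- which has norm $1$ at the first level because both are admissible operator space structures on the same normed space $X$ --- we conclude that $id:\mu(X)\to X$ is completely bounded with $\|id\|_{cb}=\|id\|=1$, i.e.\ completely contractive. Conversely, $X$ is submaximal by assumption, so Theorem~\ref{smallest} gives $\|[x_{ij}]\|^{\mu}_{n}\le\|[x_{ij}]\|^{X}_{n}$ for every $n$ and every $[x_{ij}]\in M_n(X)$; equivalently, the formal identity $id:X\to\mu(X)$ is completely contractive. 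Composing, $id:\mu(X)\to X$ is a complete isometric isomorphism, so $X\approx\mu(X)$ completely isometrically.

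I do not expect a serious obstacle here. The one point requiring care is the bookkeeping of norms in the ``only if'' direction: one must check that passing from $\Phi$ on $Max(Z)$ to its restriction on $Y$ and then corestricting onto the completely isometric copy $\theta(X)$ leaves the $cb$-norm \emph{unchanged} rather than merely bounded, since it is this that forces the equality $\|\varphi\|_{cb}=\|\varphi\|$ and not just complete boundedness. Conceptually, the content is simply that this universal property is the exact analogue, within the class of submaximal spaces, of the property characterising $Max(\cdot)$, and it is established in the same way --- forget the structure on the target, extend at the Banach-space level using the injectivity of the ambient commutative von Neumann algebra, and then re-promote to the maximal structure.
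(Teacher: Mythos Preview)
Your proof is correct and follows essentially the same route as the paper: extend $\varphi$ (after composing with the embedding into $Max(C(K)^{**})$) to the ambient space $Z$ using Banach-space injectivity of $C(K)^{**}$, then invoke maximality of $Max(Z)$ to control the $cb$-norm, and for the converse apply the property to $id:\mu(X)\to X$. The only cosmetic difference is that for the reverse contraction $id:X\to\mu(X)$ you cite Theorem~\ref{smallest}, while the paper instead reuses the just-proved ``only if'' direction (applied with target $\mu(X)$); these are equivalent one-line observations.
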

\begin{proof} Assume that $X = \mu(X)$. By definition of $\mu$-spaces,
 $ X = \mu(X) \subset Max(C(K)^{**})$, where $K = Ball(X^*)$.  Since $Y$ is submaximal, we have
$Y \subset Max(Z)$ for some
 operator space $Z$.  Now, $\varphi: Y \rightarrow
 \mu(X)$ can be regarded as a map $Y$ to $Max(C(K)^{**})$.  Since
 bidual of $C(K)$ is injective, there exists $\tilde{\varphi} : Z \rightarrow Max(C(K)^{**}) $
 with $\left\| \tilde{\varphi}\right\| = \left\| \varphi \right\|$ and
 $\tilde{\varphi}|_Y = \varphi$.  Considering $\tilde{\varphi} : Max(Z) \rightarrow
Max(C(K)^{**})$, we see that $\tilde{\varphi}$ is completely
 bounded and $\left\| \tilde{\varphi}
\right\|_{cb} = \left\| \tilde{\varphi} \right\|$. But $\left\|
\varphi \right\|_{cb} \leq \left\| \tilde{\varphi} \right\|_{cb} =
\left\| \tilde{\varphi} \right\| = \left\| \varphi \right\| $.
This  shows that $\left\| \varphi \right\|_{cb} =
\left\| \varphi \right\|$.\\
Conversely , take $Y = \mu(X)$ and $\varphi= id : \mu(X)
\rightarrow X $, the formal identity  mapping. Then by assumption,
$\left\| id \right\|_{cb} = \left\| id \right\|$ = 1.  Also, from
the above part, $\left\| id^{-1} \right\|_{cb} = \left\| id: X
\rightarrow \mu(X) \right\|_{cb}= \left\| id \right\| = 1 $.  Thus
$ id: X \rightarrow \mu(X)$ is a complete isometric isomorphism.
\end{proof}
\begin{rem}
We know that if $X$ has minimal operator space structure, then
every bounded linear map defined on another operator space with
values in $X$ is completely bounded. Also, we have shown that if
$X$ has the $\mu$-space structure, then any bounded linear map
from a submaximal space to $X$ is completely bounded. Now, let $X$
be endowed with any operator space structure $\{  \left\| .
\right\| _n \}_{n \in \mathrm{N}}$ such that $ \left\| [x_{ij}]
\right\| _n
 \leq   \left\| [x_{ij}]\right\| ^{\mu}_n $ for every $ [x_{ij}]\in M_n(X)$ , and for all $n \in \mathrm{N}$.
 In this case also, any bounded linear map $\varphi$ from a submaximal space
$ Y $ to $X$ is completely bounded with $\left\| \varphi
\right\|_{cb} = \left\| \varphi \right\|$. Here, the operator
space structure on $X$ is not submaximal, since the $\mu$-space
structure is the smallest submaximal structure on any normed
space. To see this, consider the following diagram.

\[
 \xymatrix{
Y  \ar[rr]^{\varphi}\ar[rrd]_{\tilde{\varphi}} &&X\\
&&\mu(X)\ar[u]_{id}}
\]
Since the identity mapping $id: \mu(X) \rightarrow X$ is a
complete contraction, using theorem \ref{up},  we have: $\left\|
\varphi \right\|_{cb} \leq \left\| id \right\|_{cb}\left\|
\tilde{\varphi} \right\|_{cb} \leq \left\| \tilde{\varphi}
\right\| = \left\| \varphi \right\| $. Thus, $\left\| \varphi
\right\|_{cb} = \left\| \varphi \right\|$.
\end{rem}
Now we make use of the universal property of $\mu$-spaces to show
that the class of $\mu$-spaces is stable under taking subspaces.
\begin{coro}
Let $Y \subset \mu(X)$. Then $Y$ is also a $\mu$-space.
\label{subsp}
\end{coro}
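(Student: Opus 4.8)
The plan is to derive this immediately from the universal property established in Theorem~\ref{up}, so that no direct manipulation of the matrix norms of $\mu(X)$ is needed.

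First I would check that $Y$ is submaximal. By definition $\mu(X)$ embeds completely isometrically into $Max(C(K)^{**})$ with $K = Ball(X^*)$, and $Y$ sits completely isometrically inside $\mu(X)$; composing these two embeddings exhibits $Y$ as a completely isometric subspace of a maximal operator space, so $Y$ is submaximal. This puts us in a position to invoke Theorem~\ref{up}: it suffices to show that for every submaximal space $Z$, each bounded linear map $\varphi : Z \to Y$ satisfies $\left\| \varphi \right\|_{cb} = \left\| \varphi \right\|$, and then the converse direction of Theorem~\ref{up} will force $Y$ to be a $\mu$-space up to complete isometric isomorphism.

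So let $Z$ be submaximal and $\varphi : Z \to Y$ bounded and linear, and write $\iota : Y \hookrightarrow \mu(X)$ for the inclusion, which is a complete isometry. Then $\iota \circ \varphi : Z \to \mu(X)$ is a bounded linear map into the $\mu$-space $\mu(X)$, so Theorem~\ref{up} applies and gives $\left\| \iota \circ \varphi \right\|_{cb} = \left\| \iota \circ \varphi \right\|$. Since $\iota^{(n)}$ is an isometry for every $n$, we have $\left\| \varphi^{(n)} \right\| = \left\| (\iota \circ \varphi)^{(n)} \right\|$ for all $n \in \mathrm{N}$; taking $n = 1$ gives $\left\| \varphi \right\| = \left\| \iota \circ \varphi \right\|$, and taking the supremum over $n$ gives $\left\| \varphi \right\|_{cb} = \left\| \iota \circ \varphi \right\|_{cb}$. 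Combining the three equalities yields $\left\| \varphi \right\|_{cb} = \left\| \varphi \right\|$, and in particular $\varphi$ is completely bounded. Applying the converse part of Theorem~\ref{up} to the submaximal space $Y$ then finishes the proof.

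The whole argument is a short diagram chase, so there is no real obstacle; the only point meriting a moment's care is the bookkeeping of matrix norms through $\iota$ — namely that factoring $\varphi$ through the complete isometry $\iota$ alters neither $\left\| \varphi \right\|$ nor $\left\| \varphi \right\|_{cb}$ — together with the observation that the hypothesis "$Z$ submaximal" is precisely what Theorem~\ref{up} requires in order to be applied to $\iota \circ \varphi$.
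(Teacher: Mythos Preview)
Your proof is correct and follows essentially the same approach as the paper: view a map into $Y$ as a map into $\mu(X)$ and invoke Theorem~\ref{up}. You are simply more explicit than the paper in verifying that $Y$ is submaximal and in spelling out why composing with the complete isometry $\iota$ preserves both $\|\varphi\|$ and $\|\varphi\|_{cb}$.
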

\begin{proof}  Let  $Z$ be any submaximal space. Then any bounded
linear map $\varphi: Z \rightarrow Y $ can be regarded as a map
from $Z$ to $\mu(X)$. By the universal property of $\mu$-spaces,
we see that $\left\| \varphi \right\|_{cb} = \left\| \varphi
\right\|$. This shows that $Y$ is a $\mu$-space.
\end{proof}
The following theorem gives a more general characterization of
$\mu$-spaces up to complete isomorphism.
\begin{theo}
A submaximal space $X$ is completely isomorphic to a $\mu$-space
if and only if for any submaximal space $Y$, any completely
bounded linear bijection $\varphi: X \rightarrow Y$ is a complete
isomorphism.
\end{theo}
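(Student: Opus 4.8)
The plan is to follow the pattern of the proof of Theorem~\ref{up}, but at the level of complete isomorphism rather than complete isometric isomorphism, using the open mapping theorem to produce a bounded inverse before Theorem~\ref{up} can be applied.

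For the forward implication, suppose $X$ is completely isomorphic to a $\mu$-space $V$, and fix a complete isomorphism $\psi : X \to V$. Let $Y$ be any submaximal space and let $\varphi : X \to Y$ be a completely bounded linear bijection. First I would observe that, since $X$ and $Y$ are complete, the open mapping theorem makes $\varphi^{-1} : Y \to X$ a bounded linear map. Then $\psi \circ \varphi^{-1} : Y \to V$ is a bounded linear map from a submaximal space into a $\mu$-space, so by Theorem~\ref{up} it is completely bounded. Composing on the left with the completely bounded map $\psi^{-1}$ shows that $\varphi^{-1} = \psi^{-1} \circ (\psi \circ \varphi^{-1})$ is completely bounded, and hence $\varphi$ is a complete isomorphism.

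For the converse I would test the hypothesis against $Y = \mu(X)$, the $\mu$-space structure on the underlying Banach space of $X$. Since $X$ is submaximal and, by Theorem~\ref{smallest}, $\mu(X)$ is the smallest submaximal operator space structure on $X$, the formal identity $id : X \to \mu(X)$ satisfies $\left\| [x_{ij}] \right\|^{\mu}_n \le \left\| [x_{ij}] \right\|_n$ for every $[x_{ij}] \in M_n(X)$ and every $n \in \mathrm{N}$; hence $id$ is a complete contraction, in particular a completely bounded linear bijection. By hypothesis $id : X \to \mu(X)$ is then a complete isomorphism, which is precisely the statement that $X$ is completely isomorphic to the $\mu$-space $\mu(X)$.

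No serious obstacle arises; the one point requiring a little care is that a completely bounded bijection need not a priori possess a bounded inverse until one invokes the open mapping theorem, and only afterwards can the universal property of Theorem~\ref{up} be brought to bear. It is also worth recording that $\mu(X)$ is an admissible operator space structure on $X$, so that $id : X \to \mu(X)$ has norm exactly $1$ and the conclusion genuinely identifies $X$, up to complete isomorphism, with a $\mu$-space on the same underlying Banach space.
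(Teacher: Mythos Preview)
Your argument is correct and follows essentially the same route as the paper's proof: both directions use Theorem~\ref{up} applied to $\psi\circ\varphi^{-1}$ for the forward implication and test the hypothesis against $Y=\mu(X)$ with $\varphi=id$ for the converse. Your explicit invocation of the open mapping theorem and your verification that $id:X\to\mu(X)$ is a complete contraction are details the paper leaves implicit, so if anything your write-up is slightly more careful.
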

\begin{proof}
Let $ \psi: X \rightarrow \mu(Z)$ be a complete isomorphism. Then
for any completely bounded linear bijection $\varphi: X
\rightarrow Y$, by theorem \ref{up}, $\psi \circ \varphi^{-1}: Y
\rightarrow \mu(Z)$ is completely bounded and  $\left\| \psi \circ
\varphi^{-1} \right\|_{cb} = \left\| \psi \circ \varphi^{-1}
\right\|$. Therefore, $\left\|  \varphi^{-1} \right\|_{cb}=
\left\| \psi^{-1}\circ \psi \circ \varphi^{-1} \right\|_{cb} \leq
\left\| \psi^{-1} \right\|_{cb} \left\| \psi \circ \varphi^{-1}
\right\|_{cb}\leq \infty$, showing that $\varphi$ is a complete
isomorphism. For the converse, take $Y$  as $ \mu(X)$   and
$\varphi$ as the formal identity map $id: X \rightarrow \mu(X) $.
\end{proof}
 Now we look at the case when the domain is endowed with the
$\mu$-space structure.
\begin{theo}
Let $X$ be an operator space. Then the formal identity map $id:
\mu(X) \rightarrow X$ is completely bounded if and only if for
every submaximal space $Y$, every bounded linear map $\varphi : Y
\rightarrow X$ is completely bounded. Moreover, we have: $ \|id:
\mu(X) \rightarrow X \|_{cb} = \mbox{sup} \{ \frac{\| u \|_{cb}}
{\|u\|}\}$ , where the supremum is taken over all bounded non zero
linear maps  $u: Y \rightarrow X $  and all submaximal spaces $Y$.
\end{theo}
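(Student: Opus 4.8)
The plan is to deduce both directions from the universal property of $\mu$-spaces established in Theorem~\ref{up}, by factoring maps out of submaximal spaces through $\mu(X)$.

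First I would treat the forward implication. Assume $id : \mu(X)\to X$ is completely bounded, fix a submaximal space $Y$ and a bounded linear map $\varphi : Y\to X$. Since $X$ and $\mu(X)$ have the same underlying Banach space, $\varphi$ may equally be viewed as a linear map $\tilde{\varphi} : Y\to\mu(X)$, bounded with $\|\tilde{\varphi}\| = \|\varphi\|$. As $\mu(X)$ is a $\mu$-space and $Y$ is submaximal, Theorem~\ref{up} gives that $\tilde{\varphi}$ is completely bounded with $\|\tilde{\varphi}\|_{cb} = \|\tilde{\varphi}\| = \|\varphi\|$. Factoring $\varphi = id\circ\tilde{\varphi}$ then yields
\[
\|\varphi\|_{cb}\le \|id:\mu(X)\to X\|_{cb}\,\|\tilde{\varphi}\|_{cb} = \|id:\mu(X)\to X\|_{cb}\,\|\varphi\|,
\]
so $\varphi$ is completely bounded, and for $\varphi\neq 0$ we obtain $\|\varphi\|_{cb}/\|\varphi\|\le \|id:\mu(X)\to X\|_{cb}$. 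Passing to the supremum over all submaximal $Y$ and all bounded nonzero $u:Y\to X$ gives the inequality ``$\le$'' in the displayed formula.

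For the converse I would use $\mu(X)$ itself as the test space. By its very definition $\mu(X)$ embeds completely isometrically into the maximal operator space $Max(C(K)^{**})$, so $\mu(X)$ is submaximal; and since the $\mu$-structure is an admissible operator space structure on $X$, the formal identity $id:\mu(X)\to X$ agrees with the original norm at the first matrix level, hence is bounded with $\|id\|=1$. The hypothesis, applied to $Y=\mu(X)$ and $\varphi = id$, therefore forces $id:\mu(X)\to X$ to be completely bounded. Choosing this same $u = id:\mu(X)\to X$ (with $X\neq 0$; the case $X=0$ being trivial) shows $\|u\|_{cb}/\|u\| = \|id:\mu(X)\to X\|_{cb}$, which gives the reverse inequality ``$\ge$'' and completes the equality.

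I do not anticipate a genuine obstacle: essentially all the work is carried by Theorem~\ref{up}. The only points deserving a line of justification are that $\mu(X)$ legitimately serves as a submaximal test space $Y$, and that comparing two operator space structures on a common Banach space produces a formal identity that is an isometry at the first matrix level and hence an honest bounded linear map to which the stated hypotheses apply.
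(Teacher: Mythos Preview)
Your proof is correct and follows essentially the same approach as the paper: factor any $\varphi:Y\to X$ as $id\circ\tilde\varphi$ through $\mu(X)$, invoke Theorem~\ref{up} to get $\|\tilde\varphi\|_{cb}=\|\varphi\|$, and for the converse take $Y=\mu(X)$ and $\varphi=id$. Your write-up is in fact slightly more careful than the paper's in explicitly noting that $\mu(X)$ is submaximal and that $\|id\|=1$ at the first matrix level.
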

\begin{proof} Assume that $id: \mu(X) \rightarrow X$ is completely
bounded with $\| id\|_{cb}= C$. Let $Y$ be a submaximal space and
$u: Y \rightarrow X $ be a bounded linear map. Let $\tilde{u}$
denotes the same map $u$ regarded as a map from $Y$ to $\mu(X)$.
Then by the universal property of $\mu$-spaces, $\tilde{u}$ is
completely bounded and $\left\| \tilde{u} \right\|_{cb} = \left\|
\tilde{u} \right\|= \| u \|$. Since $u = id \circ \tilde{u} $, we
have $\| u \|_{cb} \leq \|id\|_{cb}\| u \|_{cb}=C \|u\| < \infty$.
Thus $u$ is completely bounded. For the converse, take $Y $ as $
\mu(X)$, and $u$ as the identity map. Also, from the above
inequality, it follows that $ \|id: \mu(X) \rightarrow X \|_{cb} =
\mbox{sup} \{ \frac{\| u \|_{cb}} {\|u\|}\}$ , where the supremum
is taken over all bounded non zero linear maps  $u: Y \rightarrow
X $  and all submaximal spaces $Y$.
\end{proof}
The following theorem shows that an injective Banach space $X$ has
a unique submaximal space structure, or in other words $Min_S(X) =
Max_S(X)$, if $X$ is an injective Banach space.
\begin{theo}
If $X$ is an injective Banach space, then $\mu(X)$ is completely
isometrically isomorphic to $Max(X)$.
\end{theo}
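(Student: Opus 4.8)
The plan is to show that $Max(X)$ satisfies the universal property of Theorem~\ref{up} characterizing $\mu$-spaces, and then read off the claim. Since $Max(X)$ is (trivially) a submaximal space and $\mu(X)$ is the smallest submaximal structure on $X$ by Theorem~\ref{smallest}, the formal identity $id:Max(X)\to\mu(X)$ is already a complete contraction, i.e. $\left\|[x_{ij}]\right\|^{\mu}_n\le\left\|[x_{ij}]\right\|_{Max(X)}$ for every $[x_{ij}]\in M_n(X)$. So the whole content of the statement is the reverse inequality, equivalently the assertion that $id:\mu(X)\to Max(X)$ is completely contractive.

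The main step is the following: for any submaximal space $Y$ and any bounded linear map $\varphi:Y\to Max(X)$ one has $\left\|\varphi\right\|_{cb}=\left\|\varphi\right\|$. To prove it, write $Y\subset Max(Z)$ for some operator space $Z$; then at the Banach space level $Y$ is a closed subspace of $Z$ and $\varphi:Y\to X$ is bounded. Because $X$ is an injective Banach space, $\varphi$ extends to a bounded map $\widetilde{\varphi}:Z\to X$ with $\left\|\widetilde{\varphi}\right\|=\left\|\varphi\right\|$. Now view $\widetilde{\varphi}$ as a map $Max(Z)\to Max(X)$: since its domain is a maximal operator space, $\widetilde{\varphi}$ is automatically completely bounded with $\left\|\widetilde{\varphi}\right\|_{cb}=\left\|\widetilde{\varphi}\right\|=\left\|\varphi\right\|$. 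Restricting to $Y\subset Max(Z)$ gives $\left\|\varphi\right\|_{cb}\le\left\|\widetilde{\varphi}\right\|_{cb}=\left\|\varphi\right\|$, and the reverse inequality is automatic.

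To finish, apply this with $Y=\mu(X)$ (submaximal by definition) and $\varphi=id:\mu(X)\to Max(X)$, obtaining $\left\|id:\mu(X)\to Max(X)\right\|_{cb}=\left\|id:\mu(X)\to Max(X)\right\|=1$. Combined with the first paragraph, $id$ is then a complete isometry in both directions, hence $\mu(X)\approx Max(X)$ completely isometrically. (Equivalently: the main step says $Max(X)$ has the universal property of Theorem~\ref{up}, so $Max(X)$ is a $\mu$-space up to complete isometric isomorphism, and since the underlying Banach space of $Max(X)$ is $X$, that $\mu$-space is $\mu(X)$.)

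I expect the only delicate point to be the use of the hypothesis: injectivity of $X$ must be invoked at the Banach space level to produce the extension $\widetilde{\varphi}$, after which this merely bounded extension upgrades to a complete contraction for free because its domain carries the maximal structure. One cannot shortcut this by extending maps $X\to B(\mathcal H)$ through $C(K)^{**}$ using injectivity of $B(\mathcal H)$, since $B(\mathcal H)$ is injective as an operator space but not as a Banach space; the assumption that $X$ itself be injective is therefore essential.
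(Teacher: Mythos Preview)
Your proof is correct and uses essentially the same mechanism as the paper: extend the identity $\mu(X)\to X$ through the ambient maximal space using Banach-space injectivity of $X$, then upgrade the bounded extension to a complete contraction because its domain is maximal. The only difference is packaging---you first prove the general fact that $Max(X)$ enjoys the universal property of Theorem~\ref{up} and then specialize, whereas the paper applies the extension argument directly to $id:\mu(X)\subset Max(C(K)^{**})\to Max(X)$; the core step is identical.
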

\begin{proof}
Consider the formal identity map $id: \mu(X) \rightarrow Max(X)$.
By definition of $\mu$-spaces, $\mu(X)\subset Max(C(K)^{**})$ and
since $X$ is injective as a Banach space, $id$ extends to a
bounded linear map $\widetilde{id}:Max(C(K)^{**}) \rightarrow
Max(X) $ with $\| \widetilde{id} \| = \| id \|=1$. Since domain
has maximal operator space structure, we have $\| \widetilde{id}
\|_{cb} =1$ and hence $\| id \|_{cb} =1$. Also, $\| id^{-1}
\|_{cb} =1$, showing that $id: \mu(X) \rightarrow Max(X)$ is a
complete isometric isomorphism.
\end{proof}
We know that the converse of the above theorem is not true. For
example, the space $\ell_1^2$ has a unique operator space
structure \cite{gp03}, but it is not an injective Banach space.
The following theorem describes some equivalent conditions for the
uniqueness of the submaximal space structures.
\begin{theo}
For a Banach space $X$, the following are equivalent. \\(1) $X$
has a unique submaximal space structure. \\(2) $\mu(X)= Max(X)$
completely isometrically.\\(3) Any bounded linear map $\varphi: X
\to B(\mathcal H)$ admits a bounded extension
$\widetilde{\varphi}: C(Ball(X^*)) \to B(\mathcal H)$ with
$\|\widetilde{\varphi}\|= \| \varphi\|$.
\end{theo}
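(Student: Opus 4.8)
The plan is to prove the equivalences $(1)\Leftrightarrow(2)$ and $(2)\Leftrightarrow(3)$; the first is a formal ``sandwich'' argument, while the second carries the analytic content. For $(1)\Leftrightarrow(2)$, recall from Theorem~\ref{smallest} that $\mu(X)=Min_S(X)$ is the smallest submaximal operator space structure on $X$, while $Max(X)=Max_S(X)$ is the largest operator space structure on $X$; hence every submaximal structure $S$ on $X$ satisfies $\mu(X)\leq S\leq Max(X)$ in the matrix-norm order. If $X$ has a unique submaximal structure, the two submaximal structures $\mu(X)$ and $Max(X)$ must coincide, which is $(2)$; conversely, if $(2)$ holds, the sandwich $\mu(X)\leq S\leq Max(X)=\mu(X)$ forces $S=\mu(X)$ for every submaximal $S$, which is $(1)$.

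For $(2)\Rightarrow(3)$, put $K=Ball(X^*)$ and recall $\mu(X)\subset Max(C(K)^{**})$ completely isometrically, with $X\subset C(K)\subset C(K)^{**}$ the canonical isometric embeddings used to define $\mu(X)$. Let $\varphi:X\to B(\mathcal{H})$ be bounded; we may normalize $\|\varphi\|=1$. Assuming $(2)$, regard $\varphi$ as a map $\mu(X)=Max(X)\to B(\mathcal{H})$; by the universal property of $Max$ it is completely bounded with $\|\varphi\|_{cb}=\|\varphi:X\to B(\mathcal{H})\|=1$. Since $B(\mathcal{H})$ is injective as an operator space, $\varphi$ extends to a complete contraction $\widehat{\varphi}:Max(C(K)^{**})\to B(\mathcal{H})$, and then $\widetilde{\varphi}:=\widehat{\varphi}|_{C(K)}:C(K)\to B(\mathcal{H})$ is a bounded extension of $\varphi$ with $\|\widetilde{\varphi}\|\leq\|\widehat{\varphi}\|_{cb}\leq 1=\|\varphi\|$, hence $\|\widetilde{\varphi}\|=\|\varphi\|$. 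This is precisely $(3)$.

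For $(3)\Rightarrow(2)$, since $\mu(X)\leq Max(X)$ always, it is enough to prove $\|[x_{ij}]\|_{Max(X)}\leq\|[x_{ij}]\|^{\mu}_{n}$ for all $n$ and all $[x_{ij}]\in M_n(X)$. Fix such a matrix and an arbitrary $u\in Ball(B(X,B(\mathcal{H})))$; by the matrix-norm formula for $Max(X)$ it suffices to bound $\|[u(x_{ij})]\|$ by $\|[x_{ij}]\|^{\mu}_{n}$. By $(3)$, $u$ extends to $\widetilde{u}:C(K)\to B(\mathcal{H})$ with $\|\widetilde{u}\|=\|u\|\leq 1$. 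To reach the bidual present in the definition of $\mu(X)$, I would pass to the second adjoint: $\widetilde{u}^{**}:C(K)^{**}\to B(\mathcal{H})^{**}$ is a weak*-continuous contraction restricting to $\widetilde{u}$ on $C(K)$, and composing it with a complete contraction $P:B(\mathcal{H})^{**}\to B(\mathcal{H})$ that restricts to the identity on $B(\mathcal{H})$ (which exists because $B(\mathcal{H})$ is injective and embeds completely isometrically into its bidual) yields a contraction $\widehat{u}:=P\circ\widetilde{u}^{**}:C(K)^{**}\to B(\mathcal{H})$ with $\widehat{u}|_{X}=u$. Then $\|[u(x_{ij})]\|=\|[\widehat{u}(x_{ij})]\|\leq\|[x_{ij}]\|_{Max(C(K)^{**})}=\|[x_{ij}]\|^{\mu}_{n}$ by the definition of the maximal structure on $C(K)^{**}$, and taking the supremum over $u$ gives $\|[x_{ij}]\|_{Max(X)}\leq\|[x_{ij}]\|^{\mu}_{n}$; thus $\mu(X)=Max(X)$ completely isometrically.

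I expect the step $(3)\Rightarrow(2)$ to be the main obstacle: condition $(3)$ only yields extensions to $C(K)$, whereas $\mu(X)$ is defined through $C(K)^{**}$, and because $Max$ is not stable under passing to subspaces one cannot simply regard $Max(X)$ as a subspace of $Max(C(K))$. The bidual maneuver above bridges this gap, and the delicate point is to check carefully, under the canonical identifications $X\subset C(K)\subset C(K)^{**}$ and $B(\mathcal{H})\subset B(\mathcal{H})^{**}$, that $\widehat{u}$ really does restrict to $u$ on $X$ — that is, that $\widetilde{u}^{**}$ agrees with $\widetilde{u}$ on $C(K)$ and that $P$ acts as the identity on the range of $\widetilde{u}$.
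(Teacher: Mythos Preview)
Your argument is correct and follows the same overall scheme as the paper: $(1)\Leftrightarrow(2)$ by the sandwich $\mu(X)\leq S\leq Max(X)$, and $(2)\Leftrightarrow(3)$ by comparing the matrix-norm formulas for $Max(X)$ and $\mu(X)$ via extensions into $B(\mathcal H)$.

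The one genuine difference is how you handle the mismatch between $C(K)$ in condition $(3)$ and $C(K)^{**}$ in the definition of $\mu(X)$. You bridge it in $(3)\Rightarrow(2)$ with a bidual maneuver (second adjoint plus an injective projection $B(\mathcal H)^{**}\to B(\mathcal H)$), which is valid. The paper avoids this detour entirely by using a fact it records in the preliminaries: since $Max(E)\hookrightarrow Max(E^{**})$ is always a complete isometry, the $\mu$-structure on $X$ is already the structure inherited from $Max(C(K))$, so that
\[
\|[x_{ij}]\|^{\mu}_n=\|[x_{ij}]\|_{Max(C(K))}=\sup\bigl\{\|[v(x_{ij})]\|:\ v\in Ball\bigl(B(C(K),B(\mathcal H))\bigr)\bigr\}.
\]
With this identification, the extension $\widetilde u:C(K)\to B(\mathcal H)$ from $(3)$ plugs directly into the supremum for $\|\cdot\|^{\mu}_n$, and likewise in $(2)\Rightarrow(3)$ one extends only to $Max(C(K))$ rather than to $Max(C(K)^{**})$ and restricting back. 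Your route has the merit of never leaving the literal Definition~3.1; the paper's route is shorter and makes the role of $C(K)$ (as opposed to its bidual) transparent.
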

\begin{proof}
It is clear from the definition that (1) $\Leftrightarrow $(2).
Now to prove (2) $\Rightarrow $(3), regard $\varphi$ as a map from
$ \mu(X) \to B(\mathcal H)$, we see that $\varphi$ is completely
bounded and $\|\varphi\|_{cb}= \|\varphi\|$. Since
$\mu(X)\hookrightarrow Max(C(Ball(X^*)))$ completely isometrically
and since $B(\mathcal H)$ is injective, there exists an extension
$\widetilde{\varphi}: Max(C(Ball(X^*))) \to B(\mathcal H)$ with
$\|\widetilde{\varphi}\|_{cb}= \| \varphi\|_{cb}$. Thus
$\widetilde{\varphi}: C(Ball(X^*)) \to B(\mathcal H)$ satisfies
$\|\widetilde{\varphi}\|= \|\widetilde{\varphi}\|_{cb}=
\|\varphi\|_{cb}= \| \varphi\|$.\\
 Assume that any bounded
linear map $\varphi: X \to B(\mathcal H)$ admits a bounded
extension $\widetilde{\varphi}: C(Ball(X^*)) \to B(\mathcal H)$
with $\|\widetilde{\varphi}\|= \| \varphi\|$.\\
Clearly  $\left\| [x_{ij}]\right\|^{\mu} _{n}\leq \left\| [x_{ij}]
\right\|^{max} _{n}$ for any $[x_{ij}] \in M_n (X)$. By definition
of maximal spaces,
$$\left\| [x_{ij}]\right\|^{max} _{n}=\mbox{sup}\{  \left\|[\varphi(x_{ij})] ) \right\| ;  \varphi \in Ball(B(X,
B(\mathcal{H})))\}$$ where the supremum is taken over all possible
bounded linear maps $\varphi : X \rightarrow B(\mathcal{H})$ and
over all Hilbert spaces $\mathcal{H}$. Also,
\begin{align*}
 \left\| [x_{ij}]\right\|^{\mu} _{n} = & \left\| [x_{ij}]\right\|
  _{Max(C(Ball(X^*)))} \\
  = & \mbox{sup}\{  \left\|[v(x_{ij})] \right\| ;  v \in Ball(B(C(Ball(X^*)),
B(\mathcal{H})))\}
 \end{align*}
 where the supremum is taken over
all possible bounded linear maps \\$v : C(Ball(X^*))\rightarrow
B(\mathcal{H})$ and over all Hilbert spaces $\mathcal{H}$. By the
assumed extension property of $X$, corresponding to any $u \in
Ball(B(X, B(\mathcal{H})))$, we have an extended function
$\tilde{u}\in Ball(B(C(Ball(X^*)), B(\mathcal{H}))) $, so that
 $\left\| [x_{ij}]\right\|^{\mu} _{n}\geq \left\| [x_{ij}]
\right\|^{max} _{n}$ for any $[x_{ij}] \in M_n (X)$.\\
Thus $\mu(X)= Max(X)$, showing that (3) $ \Rightarrow $ (2).
\end{proof}
\begin{rem}
Since every injective Banach space has a unique submaximal space
structure, every injective Banach space $X$ has the above
described extension  property.
\end{rem}
 A  $Q$-space is an (operator) quotient of a minimal space
\cite{er}.  Note that if $X$ is a $Q$-space, then $X^*$ is a
submaximal space.  Conversely , the dual of a submaximal space is
a $Q$-space. Eric Ricard \cite{ricard} introduced the maximal $Q
$-space structure on a Banach space $X$ denoted by  $Max_Q(X)$,
where the matrix norms are defined as:
\begin{center}
$\left\| [x_{ij}] \right\|=\mbox{sup} \{ \left\|
[u(x_{ij})]\right\|_{M_n(E)}; u: X \rightarrow E $,   $E$ a $Q
$-space and $\left\| u \right\| \leq 1$ $\}$
\end{center}

 We now prove the
duality relations between the $\mu(X)$ and $Max_Q(X)$.
\begin{theo}
For any Banach space $X$, we have: $(Max_Q(X))^* = \mu(X^*)$ and
$(\mu(X))^* = Max_Q(X^*)$.
\end{theo}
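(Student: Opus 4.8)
The plan is to prove each of the two identities by showing that the space on the left possesses the property characterising the space on the right. I will use freely the standard facts that $V\hookrightarrow V^{**}$ completely isometrically for every operator space $V$, that the adjoint of a completely bounded map satisfies $\|\psi^{*}\|_{cb}=\|\psi\|_{cb}$, that a completely isometric inclusion $V\subset W$ induces a completely isometric inclusion $V^{**}\subset W^{**}$, and that $Max(B)^{**}=Max(B^{**})$ completely isometrically. I will also use from \cite{ricard} that $Max_Q(X)$ is itself a $Q$-space; combined with the defining formula for its matrix norms, this gives property $(\star)$: every contraction $u\colon X\to E$ into a $Q$-space $E$ is a complete contraction $Max_Q(X)\to E$ (scaling then gives $\|u\|_{cb}=\|u\|$ for any bounded $u\colon Max_Q(X)\to E$). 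Moreover $Max_Q(X)$ is the \emph{unique} $Q$-space structure on $X$ enjoying $(\star)$: if $V$ is a $Q$-space structure with $(\star)$, then $id\colon V\to Max_Q(X)$ is a norm-one map into a $Q$-space, hence completely contractive, so $\|\cdot\|_{Max_Q(X)}\le\|\cdot\|_{V}$, while $\|\cdot\|_{V}\le\|\cdot\|_{Max_Q(X)}$ by maximality of $Max_Q$ among $Q$-space structures.

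\textbf{Step 1: $(Max_Q(X))^{*}\approx\mu(X^{*})$.} Since $Max_Q(X)$ is a $Q$-space, its dual is submaximal, so by Theorem~\ref{up} it suffices to verify the universal property: every bounded linear $\varphi\colon Y\to (Max_Q(X))^{*}$ with $Y$ submaximal satisfies $\|\varphi\|_{cb}=\|\varphi\|$. Let $\psi\colon Max_Q(X)\to Y^{*}$ be the transpose, $\langle\psi(a),y\rangle=\langle\varphi(y),a\rangle$, so $\|\psi\|\le\|\varphi\|$. Because $Y$ is submaximal, $Y^{*}$ is a $Q$-space, so by $(\star)$ the map $\psi$ is completely bounded with $\|\psi\|_{cb}=\|\psi\|$. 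Now $\varphi$ is the restriction to $Y$ of $\psi^{*}\colon Y^{**}\to (Max_Q(X))^{*}$, hence $\|\varphi\|_{cb}\le\|\psi^{*}\|_{cb}=\|\psi\|_{cb}=\|\psi\|\le\|\varphi\|$, and the reverse inequality is automatic. By Theorem~\ref{up}, $(Max_Q(X))^{*}$ is a $\mu$-space, i.e.\ $(Max_Q(X))^{*}\approx\mu(X^{*})$ completely isometrically.

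\textbf{Step 2: $(\mu(X))^{*}\approx Max_Q(X^{*})$.} Since $\mu(X)$ is submaximal, $(\mu(X))^{*}$ is a $Q$-space, so by the uniqueness statement above it is enough to check that $(\mu(X))^{*}$ has property $(\star)$, i.e.\ every bounded $u\colon (\mu(X))^{*}\to E$ into a $Q$-space $E$ satisfies $\|u\|_{cb}=\|u\|$. I first claim that $(\mu(X))^{**}$ is again a $\mu$-space. Indeed, from $\mu(X)\subset Max(C(K)^{**})$ with $K=Ball(X^{*})$ we get $(\mu(X))^{**}\subset Max(C(K)^{**})^{**}=Max\big((C(K)^{**})^{**}\big)$ completely isometrically, and $(C(K)^{**})^{**}$ is a commutative von Neumann algebra, hence an injective commutative $C^{*}$-algebra; by the Remark, the submaximal subspace $(\mu(X))^{**}$ of $Max\big((C(K)^{**})^{**}\big)$ carries the $\mu$-structure, so $(\mu(X))^{**}\approx\mu(X^{**})$. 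Now, given $u$ as above, the adjoint $u^{*}\colon E^{*}\to (\mu(X))^{**}$ satisfies $\|u^{*}\|=\|u\|$, and $E^{*}$ is submaximal (the dual of a $Q$-space); applying Theorem~\ref{up} to the $\mu$-space $(\mu(X))^{**}$ shows $u^{*}$ is completely bounded with $\|u^{*}\|_{cb}=\|u^{*}\|$. Since $u$ is the restriction of $u^{**}\colon (\mu(X))^{***}\to E^{**}$ to $(\mu(X))^{*}$, with values in $E\subset E^{**}$, we obtain $\|u\|_{cb}\le\|u^{**}\|_{cb}=\|u^{*}\|_{cb}=\|u\|$, and equality follows. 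Thus $(\mu(X))^{*}$ is a $Q$-space with property $(\star)$, so $(\mu(X))^{*}\approx Max_Q(X^{*})$ completely isometrically.

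I expect the main obstacle to be the claim in Step 2 that $(\mu(X))^{**}$ is a $\mu$-space: it relies on the stability of injective commutative $C^{*}$-algebras (equivalently, of $1$-injective Banach spaces $C(K)$ with $K$ extremally disconnected) under passing to the bidual, together with the embedding-independence of the $\mu$-structure recorded in the Remark; everything else is a routine manipulation of transposes and canonical embeddings into biduals. As an alternative, once Step~1 is established for all Banach spaces one may apply it to $X^{*}$ to get $(Max_Q(X^{*}))^{*}\approx\mu(X^{**})\approx(\mu(X))^{**}$, and since an operator space is determined up to complete isometry by its dual (via the canonical complete isometry $V\hookrightarrow V^{**}$), this yields $Max_Q(X^{*})\approx(\mu(X))^{*}$ directly.
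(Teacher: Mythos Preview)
Your proof is correct, but it proceeds quite differently from the paper's. The paper argues both identities by sandwiching with two complete contractions: for $(Max_Q(X))^{*}=\mu(X^{*})$, it uses (i) that $(Max_Q(X))^{*}$ is submaximal, so by Theorem~\ref{smallest} the identity $(Max_Q(X))^{*}\to\mu(X^{*})$ is completely contractive, and (ii) that the canonical inclusion $X\subset(\mu(X^{*}))^{*}$ endows $X$ with a $Q$-space structure, so $id\colon Max_Q(X)\to(\mu(X^{*}))^{*}$ is completely contractive, whence by dualising and restricting, $id\colon\mu(X^{*})\to(Max_Q(X))^{*}$ is completely contractive. The second identity is then dismissed with ``follows by duality.'' Your approach instead verifies directly that each side satisfies the \emph{characterising universal property} of the other (Theorem~\ref{up} for $\mu$, and your property~$(\star)$ together with its uniqueness for $Max_Q$), via standard transpose manipulations.

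What each buys: the paper's argument for the first identity is shorter and uses only Theorem~\ref{smallest} and the maximality of $Max_Q$ among $Q$-structures; it avoids the adjoint machinery entirely. Your argument is more explicit and, importantly, gives a complete proof of the second identity---where the paper's ``by duality'' conceals exactly the point you isolate as the main obstacle, namely that $(\mu(X))^{**}\approx\mu(X^{**})$. You establish this directly from the Remark and the injectivity of the iterated bidual of $C(K)$, whereas in the paper this fact appears only \emph{after} the theorem, as Corollary~\ref{subsp}'s companion (the bidual corollary). Your route is thus longer but more self-contained; the paper's is terser but leans on an unarticulated step that your proof makes precise.
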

\begin{proof} Note that $(Max_Q(X))^* $ is a submaximal space
structure on $X^*$, so that by theorem~\ref{smallest}, the formal
identity map $id:(Max_Q(X))^* \rightarrow \mu(X^*) $ is a complete
contraction. Also, the embedding  $X \subset (\mu(X^*))^*$ gives a
$Q$-space structure on $X$. Hence the identity map $id:Max_Q(X)
\rightarrow (\mu(X^*))^* $ is a complete contraction. Taking the
duals, we see that $id:\mu(X^*) \rightarrow (Max_Q(X))^* $ is a
complete contraction. Thus $(Max_Q(X))^* = \mu(X^*)$. The other
part follows by duality.
\end{proof}
\begin{coro}
An operator space $X$ is a $\mu$-space if and only if its bidual
$X^{**}$ is a $\mu$-space.
\end{coro}
\begin{proof} Let $X = \mu(X)$. Then $X^* = \mu(X)^*= Max_Q(X^*)$,
so that   $X^{**} = (Max_Q(X^*))^* = \mu(X^{**})$. Thus $X^{**}$
is a $\mu$-space. The converse part follows from the fact that $X
\subset X^{**}$ and from the Corollary \ref{subsp}
\end{proof}

 \em Acknowledgements \em. The first
author acknowledge the financial support by University Grants
Commission of India, under Faculty Development Programme. The
authors are grateful to Prof. Gilles Pisier and Prof. Eric Ricard
for having many discussions on this subject. Also, the first
author would like to thank Prof. V.S. Sunder, the organizer of the
Instructional Workshop on the Functional Analysis of Quantum
Information Theory in the Institute of Mathematical Sciences,
Chennai, India, in January 2012, which gave an opportunity to
interact with Prof. Gilles Pisier.

Corresponding Author : Vinod Kumar. P


\begin{thebibliography}{99}
%
%
\bibitem{b92}
D. Blecher, `The standard dual of an operator space', {\em Pacific
J. Math. }153 (1992) 15--30.
%
\bibitem{bp91}
 D. Blecher and V. Paulsen,
'Tensor products of operator spaces',{\em J. Funct. Anal.} 99
(1991)  262--292.
%
\bibitem{ce77}
  M.-D. Choi and E. Effros,
 `Injectivity and operator spaces',
 {\em J. Funct. Anal. } 24  (1977) 156--209.
%
\bibitem{er91}
 E. Effros and Z.-J. Ruan, `A new approach to operator spaces',
{\em Canad. Math. Bull. } 34 (1991) 329--337.
%
\bibitem{er93}
 E. Effros and Z.-J. Ruan, `On the abstract characterizations
of  operator spaces', {\em Proc. Amer.Math. Soc. } 119  (1993)
579--584.
%
\bibitem{er02}
 E. Effros and Z.-J. Ruan,
 {\em Operator Spaces }(London Math. Soc. Monographs. New Series,
Oxford University Press, 2000).
%
\bibitem{tim04}
 T. Oikhberg, `Subspaces of Maximal Operator Spaces', {\em
Integr. equ. oper. theory }48 (2004) 81--102.
%
\bibitem{vp92}
 V. Paulsen, `Representation of function algebras, Abstract
operator spaces and Banach space geometry', {\em J. Funct. Anal.}
109 (1992) 113--129.
%
\bibitem{vp96}
 V. Paulsen, `The maximal operator space of a normed space',
{\em Proc. Edinburgh. Math. Soc.} 39 (1996) 309--323.
%
\bibitem{pa02}
 V. Paulsen,
 {\em Completely Bounded maps and Operator Algebras }(Cambridge University Press, 2002).
%
\bibitem{gp96}
 G. Pisier, `The operator Hilbert space \textsc{OH}, complex
interpolation and tensor norms ', {\em Mem. Amer. Math. Soc.} 122
585 (1996) 1--103.
%
\bibitem{gp03}
 G. Pisier,
 {\em Introduction to operator space theory }(Cambridge University Press, 2003).
%
\bibitem{ricard}
 E. Ricard,
 {\em De'composition de $H^1$, multiplicateurs de Schur et espaces d' operators }(Ph.D thesis , Universite   Paris VI., 2001).
%
\bibitem{er}
 E. Ricard, `A tensor norm for $\textbf{Q}$-spaces', {\em J.
Operator theory} 48 (2002) 431--445.
%
\bibitem{sm83}
 R. R. Smith, `Completely bounded maps between
$\textbf{C}^*$-algebras', {\em J. London. Math. Soc.}27 (1983)
157--166.
%
%
\bibitem{r88}
 Z.-J. Ruan, `Subspaces of $\textbf{C}^*$-algebras', {\em J.
Funct. Anal.} 76 (1988) 217--230.
%
\end{thebibliography}
\end{document}